\numberwithin{equation}{section}
\numberwithin{figure}{section}
\theoremstyle{plain}
\newtheorem{thm}{\protect\theoremname}
  \theoremstyle{plain}
  \theoremstyle{definition}
  \newtheorem{defn}[thm]{\protect\definitionname}
  \theoremstyle{plain}
  \theoremstyle{plain}
  \providecommand{\corollaryname}{Corollary}
  \providecommand{\definitionname}{Definition}
  \providecommand{\lemmaname}{Lemma}
  \providecommand{\propositionname}{Proposition}
\providecommand{\theoremname}{Theorem}
  \providecommand{\remarkname}{Remark}
\newcommand{\Z}{\mathbb{Z}}
\newcommand{\R}{\mathbb{R}}
\newcommand{\q}{\mathbf{q}}
\newcommand{\w}{\mathbf{w}}
\newcommand{\p}{\mathbf{p}}
\newcommand{\e}{\mathbf{e}}
\newcommand{\0}{\mathbf{0}}
\newcommand{\E}{\mathbf{E}}
\newcommand{\W}{\mathbf{W}}
\newcommand{\pd}[1]{\frac{\partial}{\partial #1}}
\newcommand{\cbinom}[2]{\begin{Bmatrix} #1 \\ #2 \end{Bmatrix}}
\DeclareMathOperator{\vol}{vol}
\DeclareMathOperator{\Todd}{Todd}
\begin{document}

\title{A Continuous Analogue of Lattice Path Enumeration}

\author{Tanay Wakhare$^{\dag}$}
\address{$^{\dag}$~University of Maryland, College Park, MD 20742, USA}
\address{$^{\dag}$~National Institute for Biological and Mathematical Synthesis, Knoxville, TN 37996, USA}
\email{twakhare@gmail.com}
\author{Christophe Vignat$^{\ddag}$}
\address{$^\ddag$~Tulane University, New Orleans, LA 70118, USA}
\address{$^\ddag$~LSS/Supelec, Universite Paris Sud Orsay, France}
\email{cvignat@tulane.edu} 
\author{Quang-Nhat Le$^{\S}$}
\address{$^\S$~Brown University, Providence, RI 02912, USA}
\email{quang\_nhat\_le@brown.edu}
\author{Sinai Robins$^{\ast}$}
\address{$^\ast$~Brown University, Providence, RI 02912, USA}
\email{sinai.robins@gmail.com}

\begin{abstract}
Following the work of Cano and D\'iaz, we consider a continuous analog of lattice path enumeration. This allows us to define a continuous version of any discrete object that counts certain types of lattice paths. As an example of this process, we define continuous versions of binomial and multinomial coefficients, and describe some identities and partial differential equations they satisfy. Finally, we illustrate a general method to recover discrete combinatorial quantities from their continuous analogs.
\end{abstract}

\maketitle

\section{Introduction}

Cano and D\'iaz \cite{Diaz1, Diaz2} have recently explored a novel method of obtaining continuous analogues of discrete objects such as binomial coefficients and Catalan numbers. First, they realized these discrete quantities as the number of certain \textbf{lattice paths}. Next, they considered \textbf{directed paths} as continuous extensions of lattice paths and define \textbf{moduli spaces} of directed paths. Finally, they declared the volumes of these moduli spaces to be the continuous versions of the original discrete objects. 	

Here we extend some of Cano and D\'iaz's work to higher dimensions, obtaining a partial differential equation that the continuous multinomials satisfy, which generalizes the partial differential equation of Cano and D\'iaz from dimension $2$ to dimension $n$. Finally, we show how to use Todd operators to discretize these moduli spaces, enabling us to retrieve the number of lattice paths from their associated moduli space. 
	
One motivation for extending the study of lattice paths to a natural continuous analogue is that it is difficult to count discrete lattice paths satisfying various geometric constraints; it might be much easier to compute the volume of a related polytope (or union of polytopes) in some cases. This can in turn give us natural bounds for the number of lattice paths, and especially lattice paths under additional geometric constraints. 

We begin by describing the work of Cano and D\'iaz in some detail.  Consider a collection of vectors $\W=\{ \w_1,\dots,\w_N \}$  in $\Z^d$ which all lie on the same side of some fixed hyperplane containing the origin.  The vectors $\w_i$  are called \textbf{admissible directions}. 
	We define a \textbf{lattice path} as an ordered $(n+1)$-tuple of integer vectors
\begin{align}\label{latticepath}
&(\0, \p_1, \dots, \p_n), \ \text{ with each } \p_j \in \Z^d, \text{ and where } 
\end{align}
\begin{equation*}
 \p_k := \p_{k-1} + \lambda_k \w_{c_k},
\end{equation*}
for some $\w_{c_k} \in \mathbf W$, and $\lambda_k \in \Z_{\geq 0}$. Intuitively, a lattice path is a finite path in $\mathbb Z^d$ that follows (some of) the directions $\w_1,\dots,\w_N$ using integer steps.  The classical example of lattice path counting is the binomial coefficient $\binom{b}{a}$, which counts the number of lattice paths in $\R^2$ from the origin to a point $\q:= (a,b-a) \in \Z_{\geq 0}^2$, using the directions $\w_1:= (1,0)$ and $\w_2 := (0,1)$. 	It is also desirable to give a $1-1$ correspondence between each lattice path and the relevant $\lambda_k$ that define it, as is done in \eqref{polytope_of_directed_paths} below.
	
We would like to explore the space of all paths from the origin to some fixed $\q \in \R^d$, still using the admissible directions $\W$, but now using real coefficients.   Following Cano and D\'iaz, we
define a \textbf{directed path} using the same set-up as in definition
 \eqref{latticepath} above except for the important difference that now each coefficient $\lambda_i$ is a non-negative real number.  
Consider the set of all directed paths from the origin to a fixed $\q \in \R^d$, using the set of directions from the admissible directions $\W$.  That is, we define

\begin{equation}\label{polytope_of_directed_paths}
P(\q, \mathbf c) := \{  
(\lambda_1, \dots, \lambda_n) \in \R_{\geq 0}^n \ | \ 
 \lambda_1 \w_{c_1} + \dots + \lambda_n \w_{c_n} = \q
\},
\end{equation}
for some $\w_{c_1},   \dots,  \w_{c_n} \in \W$.  By definition,  $P(\q, \mathbf c)$ is
a polytope, which we call a \textbf{path polytope}. We call the collection of indices used here, namely $\mathbf c:= (c_1, \dots, c_n),$ a \textbf{pattern} for the directed paths.    It also follows easily from this definition that the path polytope 
$P(\q, \mathbf c) \subset \R^n$ has dimension $n-d$. Most importantly, we can interpret  the set of integer points in $P(\q, \mathbf c)$ as the set of {\em lattice paths}, with pattern $\mathbf c$,  defined by \eqref{latticepath}.  In other words, we define
\begin{equation}\label{lattice.points.in.the.path.polytope}
L(\q, \mathbf c) := \{P(\q, \mathbf c) \cap \Z^d \},
\end{equation}	
	the set of integer points in $P(\q, \mathbf c)$, which is also the set of 
lattice paths (from $\0$ to $\q$) that use the subset  $\w_{c_1},   \dots,  \w_{c_n}$       
of the admissible directions $\W$.

We next describe the moduli space of all directed paths from the origin to any 
$\q \in \R^d$.  In order to do so, we must first consider all ``words'' in the alphabet consisting of the ``directions'' given by $\W$.   Precisely,  let $D(n,N)$ be the set of  words of length $n$, in $N$ symbols, with no occurrence of two equal consecutive symbols appearing in any word.  Such words are also known as \textbf{Smirnov words}. In this context, we use the Smirnov words to keep track of the indices of the admissible directions in a given path, which each pattern $\mathbf c$ represents.
	
The \textbf{moduli space of all directed paths} from the origin to $\q \in \R^d$ is defined by the union of the following polytopes:
	\[ \mathcal{M}_{\W}(\q) = \coprod_{n=0}^{\infty} \coprod_{\mathbf c \in D(n,N)}
	P(\q, \mathbf c) \]
This moduli space can be endowed with a natural flat metric, which is one of the innovations in the work of Cano and D\'iaz.  This definition of the muduli space of directed paths also suggests a natural definition for the volume of the moduli space, namely:  
\[ 
\vol(\mathcal{M}_{\mathbf{W}}(\mathbf{q})) := \sum_{n=0}^{\infty} \sum_{\mathbf c \in D(n,N)} \vol P(\q, \mathbf c). 
\]

These definitions also appeared in \cite{Diaz2}.
To be concrete, we first demonstrate the novel extension of the classical binomial coefficients to continuous binomial coefficients. Namely, Cano and D\'iaz defined 
for each $\q:= (s,x-s)  \in \R_{\geq 0}^2$, with $0 < s < x$, the following continuous binomial coefficient:
\begin{equation}
\cbinom{x}{s} := 
\vol(\mathcal{M}_{\mathbf{W}}(\mathbf{q})) := \sum_{n=0}^{\infty} \sum_{\mathbf c \in D(n,2)} \vol P(\q, \mathbf c). 
\end{equation}
We note that here the dimension is $d=2$, the set of admissible directions is 
$\W := \{(1,0), (0,1)\}$, and so $N=2$.  
In addition, each path of length $n$ has a pattern $\mathbf c:= (c_1, \dots, c_n)$, and hence the dimension of the corresponding path polytope $P(\q, \mathbf c)$ has dimension $n-2$.

Moreover Cano and D\'iaz obtained the following interesting formula for the continuous binomial coefficients:
\begin{equation}
	\label{I0I1}
		\cbinom{x}{s} = 2I_{0}\left(2\sqrt{s\left(x-s\right)}\right)+\frac{x}{\sqrt{s\left(x-s\right)}}I_{1}\left(2\sqrt{s\left(x-s\right)}\right),
\end{equation}
where $I_{\nu}(z)$ denotes the modified Bessel function of the first kind.

We study the $d$-dimensional extension of this continuous binomial coefficient to continuous multinomial coefficients.  Suppose we consider all lattice paths from the origin $\0$ to any $\q \in \Z^d$, using the standard basis as the set of admissible directions  
$\E := \{ \e_1,\dots,\e_d \}$.  The classical fact here is that the number of such lattice paths equals the multinomial coefficient:

\[
 \begin{pmatrix} q_1+\dots+q_d \\ q_1\ ,\ \dots\ ,\ q_d \end{pmatrix} := \frac{(q_1+\dots+q_d)!}{q_1!\ \dots\ q_d!}.
\]

We fix any $\q \in \R_{\geq 0}^d$, and as before we consider all directed paths between the origin and $\q$.  Fixing a pattern $\mathbf c:= (c_1, \dots, c_n)$, we get a path polytope 
$P(\q, \mathbf c)$ with dimension $n-d$.  
 A natural definition for the {continuous multinomial} would then be:
\begin{equation}
		\begin{Bmatrix} x_1+\dots+x_d \\ x_1\ ,\ \dots\ ,\ x_d \end{Bmatrix} := \vol \left( \mathcal{M}_{\mathbf{E}}(\mathbf{x}) \right)= 
		 \sum_{n=0}^{\infty} \sum_{\mathbf c \in D(n,d)} \vol P(\q, \mathbf c). 
\end{equation}

However, here we must correct an irregularity in the work of Cano and  D\'iaz. There was a minor error in their calculation of $P(\q, \mathbf c)$ amounting to an extra multiplicative factor of $\sqrt d$,  where $d$ is the dimension of the ambient space.   This leads us to consider a new definition for the continuous multinomial coefficient, with more details appearing below in Section \ref{errors}.   Assuming this new definition of a continuous multinomial, we can now state our main results. We recall the 
Borel transform $\mathcal{B}(f)$, which acts on a univariate function
$f(x) = \sum_{i=0}^{\infty} k_i x^i$ (which must therefore be analytic at the origin) by the formula:
\[ 
\mathcal{B}(f) (x) := \sum_{i=0}^{\infty} \frac{k_i}{i!}\ x^i. 
\]

For a multi-variable analytic function $f(x) = \sum_{i_1,\dots,i_d=0}^{\infty} k_{i_1 \dots i_d} x^{i_1} \dots x^{i_d}$, we define similarly its Borel transform as
	\[ \mathcal{B}(f) (x_1,\dots,x_d) := \sum_{i_1,\dots,i_d=0}^{\infty} \frac{k_{i_1 \dots i_d}}{i_1! \dots i_d!}\ x^{i_1} \dots x^{i_d}. \]
	
\medskip
	
\begin{thm}\label{thm:ContMonomials}
Let
\begin{equation}\label{F}
F(x_1,\dots,x_d) := \frac{1}{1-\left( \frac{x_1}{1+x_1} + \dots + \frac{x_d}{1+x_d} \right)}, 
\end{equation}
which is analytic at $(0, \dots, 0)$. 
Then the continuous multinomial is equal to
\[ 
\begin{Bmatrix} x_1+\dots+x_d \\ x_1\ ,\ \dots\ ,\ x_d \end{Bmatrix} = \pd{x_1} \cdots \pd{x_d} \mathcal{B}(F) (x_1,\dots,x_d).
\]	
\end{thm}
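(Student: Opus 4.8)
The plan is to compute the volume of the moduli space directly by summing, over all Smirnov patterns $\mathbf c$, the volumes of the path polytopes $P(\x,\mathbf c)$, and to recognize the resulting series as the stated differential operator applied to $\mathcal B(F)$. First I would pin down $\vol P(\x,\mathbf c)$ for a fixed pattern. A pattern $\mathbf c \in D(n,d)$ uses the direction $\e_j$ in some set of positions; if it uses $\e_j$ exactly $m_j$ times (so $m_1+\dots+m_d = n$ and, by the Smirnov condition, each consecutive pair of positions differs), then the defining equations $\sum_k \lambda_k \e_{c_k} = \x$ decouple coordinate by coordinate: the $\lambda_k$ attached to $\e_j$ must be nonnegative reals summing to $x_j$. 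Hence $P(\x,\mathbf c)$ is a product of simplices $\prod_{j=1}^d \Delta_{m_j-1}(x_j)$, where $\Delta_{m-1}(t)$ is the $(m-1)$-dimensional simplex $\{\mu \in \R_{\geq 0}^m : \sum \mu_i = t\}$. Using the corrected normalization from Section \ref{errors} (the one that removes the spurious $\sqrt d$), the relative volume of $\Delta_{m-1}(t)$ is $t^{m-1}/(m-1)!$, so
\[
\vol P(\x,\mathbf c) = \prod_{j=1}^{d} \frac{x_j^{m_j-1}}{(m_j-1)!},
\]
which depends only on the composition $(m_1,\dots,m_d)$ and not on the order of symbols in $\mathbf c$.

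Next I would organize the double sum $\sum_{n\ge 0}\sum_{\mathbf c\in D(n,d)}$ by first fixing the content vector $(m_1,\dots,m_d)$ and counting how many Smirnov words have that content. So the continuous multinomial equals
\[
\sum_{\substack{m_1,\dots,m_d \geq 1}} S(m_1,\dots,m_d)\,\prod_{j=1}^{d}\frac{x_j^{m_j-1}}{(m_j-1)!},
\]
where $S(m_1,\dots,m_d)$ is the number of Smirnov words with $m_j$ copies of the $j$-th symbol. The key external input is the generating function for Smirnov words by content, a classical result (Carlitz / Goulden–Jackson): $\sum S(m_1,\dots,m_d)\,y_1^{m_1}\cdots y_d^{m_d} = F$ with $y_j = \frac{x_j}{1+x_j}$ substituted, i.e. precisely
\[
\sum_{m_1,\dots,m_d\ge 0} S(m_1,\dots,m_d)\, y_1^{m_1}\cdots y_d^{m_d}
= \frac{1}{1-\left(\frac{y_1}{1+y_1}+\dots+\frac{y_d}{1+y_d}\right)}\Big|_{y_j \mapsto x_j}
\]
— wait, more precisely, the standard identity is that $\sum S(\mathbf m) \prod t_j^{m_j}$, as a function of the $t_j$, equals $1/\bigl(1-\sum_j \frac{t_j}{1+t_j}\bigr)$; call this $G(t_1,\dots,t_d)$, so that $F = G$ exactly as written in \eqref{F}.

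Then I would match this against the Borel transform. Writing $G(x_1,\dots,x_d) = \sum_{\mathbf m} S(\mathbf m)\, x_1^{m_1}\cdots x_d^{m_d}$, its Borel transform is $\mathcal B(G)(\x) = \sum_{\mathbf m} \frac{S(\mathbf m)}{m_1!\cdots m_d!}\, x_1^{m_1}\cdots x_d^{m_d}$. Applying $\partial_{x_1}\cdots\partial_{x_d}$ brings down a factor $m_j$ for each $j$ and lowers each exponent by one, giving $\sum_{\mathbf m} \frac{S(\mathbf m)}{(m_1-1)!\cdots(m_d-1)!}\, x_1^{m_1-1}\cdots x_d^{m_d-1}$ (terms with any $m_j = 0$ are killed by the derivative), which is exactly the series computed for the continuous multinomial above. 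This establishes the identity, modulo convergence: since $F$ is analytic at the origin, $\mathcal B(F)$ is entire of exponential type and the term-by-term manipulations are justified on all of $\R_{\geq 0}^d$.

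The main obstacle is the bookkeeping in the first two steps: correctly identifying $P(\x,\mathbf c)$ as a product of simplices (which requires care with the Smirnov constraint and with the relative-volume normalization fixed in Section \ref{errors}), and then invoking the Smirnov-word generating function in the exact algebraic form $1/\bigl(1-\sum_j \frac{x_j}{1+x_j}\bigr)$. Once those two facts are in hand, the passage through the Borel transform and the derivative is a routine coefficient comparison. A secondary point to check is that collecting the double sum over $(n,\mathbf c)$ into a single sum over contents $\mathbf m$ is legitimate, i.e. that everything converges absolutely; this follows from analyticity of $F$ at the origin together with the $1/(m_j-1)!$ decay, but it is worth stating explicitly.
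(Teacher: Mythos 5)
Your proposal is correct and follows essentially the same route as the paper: decompose each path polytope into a product of simplices with the Cano--D\'iaz-normalized volumes $x_j^{m_j-1}/(m_j-1)!$ (this is the content of Section \ref{errors} and is built into the paper's definition of the continuous multinomial), group the sum by frequency vectors, invoke the Flajolet--Sedgewick generating function for Smirnov words to identify the coefficients with those of $F$, and read off the result as $\pd{x_1}\cdots\pd{x_d}\mathcal{B}(F)$.
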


\bigskip
	
Another interesting result of Cano and Diaz  \cite{Diaz2} is the following elegant and surprising identity for the continuous binomial coefficients (in dimension $2$):
\begin{equation}    \label{continuous.binomial}
 \pd{x}\pd{y}\cbinom{x+y}{x} = \cbinom{x+y}{x}. 
 \end{equation}

This appears the continuous analogue of the usual identity
\[
\Delta_{n} \Delta_{k} \binom{n+k}{k} = \binom{n+k}{k}
\]
for binomial coefficients, where $\Delta_{n}f\left(n\right)=f\left(n+1\right)-f\left(n\right)$ is the forward difference operator.
	
\medskip
\noindent
We obtain the following generalization of \eqref{continuous.binomial}, in the case of dimension $d$, for the multinomial coefficients. 	
	
	\begin{thm}\label{thm:PDIdentity}
		As a multi-variable function, the continuous multinomial satisfies the following partial differential equation:
		\begin{equation} 
			\prod_{j=1}^n \left(1+\pd{x_j}\right) \begin{Bmatrix} x_1+\dots+x_d \\ x_1\ ,\ \dots\ ,\ x_d \end{Bmatrix} = \sum_{i=1}^n \prod_{j \neq i} \left(1+\pd{x_j}\right) \begin{Bmatrix} x_1+\dots+x_d \\ x_1\ ,\ \dots\ ,\ x_d \end{Bmatrix}. \label{eq:PDIdentity}
		\end{equation}
	\end{thm}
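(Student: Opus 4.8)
The plan is to derive \eqref{eq:PDIdentity} from Theorem~\ref{thm:ContMonomials} together with two elementary properties of the Borel transform. Write $G:=\partial_{x_1}\cdots\partial_{x_d}\,\mathcal{B}(F)$ for the continuous multinomial (so $d$ is the number of variables, written $n$ in the statement), with $F$ as in \eqref{F}. Since the operators $1+\partial_{x_j}$ and $\partial_{x_l}$ all commute with one another, the left-hand side of \eqref{eq:PDIdentity} equals $\partial_{x_1}\cdots\partial_{x_d}\bigl(\prod_{j=1}^{d}(1+\partial_{x_j})\,\mathcal{B}(F)\bigr)$ and the right-hand side equals $\partial_{x_1}\cdots\partial_{x_d}\bigl(\sum_{i=1}^{d}\prod_{j\neq i}(1+\partial_{x_j})\,\mathcal{B}(F)\bigr)$. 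Hence it suffices to show that
\[
\prod_{j=1}^{d}\left(1+\partial_{x_j}\right)\mathcal{B}(F)\;-\;\sum_{i=1}^{d}\prod_{j\neq i}\left(1+\partial_{x_j}\right)\mathcal{B}(F)
\]
lies in the kernel of $\partial_{x_1}\cdots\partial_{x_d}$; in fact I expect this difference to be the constant function $1$, which the prefactor then annihilates. (This mirrors the discrete situation: the continuous relation $\prod_j(1+\partial_{x_j})G=\sum_i\prod_{j\neq i}(1+\partial_{x_j})G$ is the exact analogue, under $E_j\leftrightarrow 1+\partial_{x_j}$, of the iterated Pascal identity $E_1\cdots E_d\binom{q_1+\dots+q_d}{q_1,\dots,q_d}=\sum_i\prod_{j\neq i}E_j\binom{q_1+\dots+q_d}{q_1,\dots,q_d}$ for the unit shift operators $E_j$, and the stray constant is killed by the outermost derivatives exactly as a difference operator kills constants.)

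The first ingredient is the intertwining relation $\partial_{x_j}\,\mathcal{B}(x_j g)=\mathcal{B}(g)$, valid for every $g$ analytic at the origin; it is immediate by comparing the coefficients of each monomial on the two sides. Rewriting it as $\left(1+\partial_{x_j}\right)\mathcal{B}(g)=\partial_{x_j}\,\mathcal{B}\bigl((1+x_j)\,g\bigr)$ and iterating over any index set $S$ gives
\[
\prod_{j\in S}\left(1+\partial_{x_j}\right)\mathcal{B}(F)\;=\;\Bigl(\prod_{j\in S}\partial_{x_j}\Bigr)\,\mathcal{B}\Bigl(\,\prod_{j\in S}(1+x_j)\cdot F\,\Bigr).
\]
Writing $P:=\prod_{l=1}^{d}(1+x_l)$ and $P_i:=P/(1+x_i)$, the cases $S=\{1,\dots,d\}$ and $S=\{1,\dots,d\}\setminus\{i\}$ give $\prod_{j=1}^{d}(1+\partial_{x_j})\mathcal{B}(F)=\partial_{x_1}\cdots\partial_{x_d}\,\mathcal{B}(FP)$ and $\prod_{j\neq i}(1+\partial_{x_j})\mathcal{B}(F)=\bigl(\prod_{j\neq i}\partial_{x_j}\bigr)\mathcal{B}(FP_i)$. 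The second ingredient is that $\mathcal{B}$ fixes every multilinear polynomial — each variable occurring to degree at most $1$, and $0!=1!=1$ — so $\mathcal{B}(P)=P$ and therefore $\partial_{x_1}\cdots\partial_{x_d}\,\mathcal{B}(P)=\partial_{x_1}\cdots\partial_{x_d}\,P=1$.

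Next I will feed in the functional equation that $F$ satisfies. From \eqref{F} we get $F=1+\sum_{j=1}^{d}\frac{x_j}{1+x_j}\,F$, and multiplying through by $P$ turns this into $FP=P+\sum_{i=1}^{d}x_i\,F\,P_i$. Applying $\mathcal{B}$ (which is linear) and then $\partial_{x_1}\cdots\partial_{x_d}$, and using $\mathcal{B}(P)=P$, $\partial_{x_1}\cdots\partial_{x_d}\,P=1$ and the intertwining relation $\partial_{x_i}\,\mathcal{B}(x_i\,F P_i)=\mathcal{B}(F P_i)$, this should become
\[
\partial_{x_1}\cdots\partial_{x_d}\,\mathcal{B}(FP)\;=\;1\;+\;\sum_{i=1}^{d}\Bigl(\prod_{j\neq i}\partial_{x_j}\Bigr)\mathcal{B}(F P_i).
\]
By the formulas of the previous paragraph the left-hand side is $\prod_{j=1}^{d}(1+\partial_{x_j})\mathcal{B}(F)$ and the $i$-th summand on the right is $\prod_{j\neq i}(1+\partial_{x_j})\mathcal{B}(F)$, so the difference displayed at the start equals $1$. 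Applying $\partial_{x_1}\cdots\partial_{x_d}$ to this relation kills the constant and yields \eqref{eq:PDIdentity}. As a sanity check, for $d=2$ the identity collapses to $\partial_{x_1}\partial_{x_2}G=G$, recovering \eqref{continuous.binomial}.

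I expect the only real difficulty to be bookkeeping: pinning down the correct direction of $\partial_{x_j}\,\mathcal{B}(x_j g)=\mathcal{B}(g)$, tracking the stray constant $1$ through the computation (it is precisely this term that explains why \eqref{eq:PDIdentity} carries the outermost $\partial_{x_1}\cdots\partial_{x_d}$ rather than holding verbatim at the level of $\mathcal{B}(F)$), and confirming that $FP$, $x_iFP_i$ and $P$ are all analytic at the origin so that $\mathcal{B}$ applies to them — which holds since $F$ is analytic there by Theorem~\ref{thm:ContMonomials} and the remaining factors are polynomials.
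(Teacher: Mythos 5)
Your proposal is correct, but it proves the theorem by a genuinely different route than the paper does. The paper's argument is geometric: it decomposes the moduli space $\mathcal{M}_{\mathbf{E}}(\mathbf{x})$ according to the direction of the \emph{last} step of a directed path, slices along the length of that step, and uses Fubini's theorem to obtain the recursion $\pd{x_i} \vol(\mathcal{M}^{x_i}) = \sum_{j \neq i} \vol(\mathcal{M}^{x_j})$, hence $\left(1+\pd{x_i}\right)\vol(\mathcal{M}^{x_i}) = \vol(\mathcal{M})$, from which \eqref{eq:PDIdentity} follows by applying $\prod_{j\neq i}\left(1+\pd{x_j}\right)$ and summing over $i$. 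You instead work purely algebraically from Theorem \ref{thm:ContMonomials}: the intertwining relation $\pd{x_j}\mathcal{B}(x_j g)=\mathcal{B}(g)$, the fact that $\mathcal{B}$ fixes the multilinear polynomial $P=\prod_l(1+x_l)$, and the functional equation $F=1+\sum_j \frac{x_j}{1+x_j}F$ combine to give the sharper statement
\[
\prod_{j=1}^{d}\left(1+\pd{x_j}\right)\mathcal{B}(F)\;=\;1+\sum_{i=1}^{d}\prod_{j\neq i}\left(1+\pd{x_j}\right)\mathcal{B}(F),
\]
after which $\pd{x_1}\cdots\pd{x_d}$ annihilates the constant and yields \eqref{eq:PDIdentity}. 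I verified each ingredient, including the claim that the stray term is exactly the constant $1$; for $d=2$ this says $\pd{x}\pd{y}\mathcal{B}(F)=\mathcal{B}(F)+1$, which is consistent with the correct Smirnov expansion $F=1+\sum_{n\ge1}2x^ny^n+\sum_{n\ge0}(x^{n+1}y^n+x^ny^{n+1})$ and with $\cbinom{x+y}{x}=\mathcal{B}(F)+1$. (Both you and the paper silently read the indices $n$ in the statement of the theorem as $d$.) What each approach buys: the paper's proof explains the identity combinatorially as a last-step decomposition of paths and is independent of Theorem \ref{thm:ContMonomials}, while yours makes the theorem an immediate formal consequence of Theorem \ref{thm:ContMonomials}, sidesteps the volume-normalization subtleties of Section \ref{errors} and the Fubini/slicing argument, and produces a refinement of \eqref{eq:PDIdentity} with an explicit constant.
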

	
	
\bigskip

\bigskip

	The  paper is organized as follows: in Section \ref{errors} we correct an error in \cite{Diaz1} and \cite{Diaz2}, and then motivate our definition of the continuous multinomial. In Section \ref{Section3} we prove Theorem \ref{thm:ContMonomials} and derive a closed form expression for the continuous multinomial in terms of the Borel transform. In Section \ref{Section:PDE} we prove Theorem  \ref{thm:PDIdentity} and extend the two dimensional PDE identity of Cano and D\'iaz. In Section \ref{Section5} we show how to recover discrete multinomials from continuous multinomials, and carry out the calculation in two dimensions.
	
\section{A correction and new definitions}\label{errors}

Here we point out that the papers of Cano and Diaz  [\cite{Diaz1}, \cite{Diaz2}] use a 
different  definition for volumes of simplices than the usual Riemannian definition of volume.  We now explain this discrepancy between the two definitions of the volume of a simplex.
	Consider the $n$-simplex 
\begin{equation}
\Delta_n^t := \{s_0, \ldots, s_n \in \R_{\geq 0} : s_0 + \cdots + s_n =t\},
\end{equation}
 which is embedded in $\R^{n+1}$. Now let $P_n^t$ be the convex hull of the origin and $\Delta_n^t$. We note that $\Delta_n^1$ is the convex hull of the standard basis $\{e_0, \ldots, e_n\}$ while $P_n^1$ is the convex hull of $\{0, e_0, \ldots, e_n\}$. The $(n+1)$ dimensional volume of $P_n^1$ is $\frac{1}{(n+1)!}$, but is also equal to
  $\frac{1}{n+1} \vol (\Delta_n^1) d(0, \Delta_n^1)$, where $\vol$ is the $n$-dimensional volume of $\Delta_n^1$ and $d(0, \Delta_n^1)$ is the distance from the origin to $\Delta_n^1$. It can be observed that $d(0,\Delta_n^1) = \frac{1}{\sqrt{n+1}}$. Therefore,
\begin{equation}
\vol(\Delta_n^1) = \frac{1}{(n+1)!} \frac{n+1}{d(0,\Delta_n^1)} = \frac{\sqrt{n+1}}{n!}
\end{equation}
and
\begin{equation}
\vol(\Delta_n^t) = \frac{t^n \sqrt{n+1}}{n!}.
\end{equation}
For instance, when $n=1$, $\Delta_n^1$ is the segment connecting $(1,0)$ and $(0,1)$ which has length $\frac{\sqrt{1+1}}{1!} = \sqrt{2}$.

The work \cite{Diaz1} instead calculates the volume differently. It uses the following parametrization of $\Delta_n^1$:
\begin{equation}
l_1 = s_0, l_2 = s_0 + s_1, \ldots, l_n = \sum_{i=0}^{n-1} s_i,
\end{equation}
and writes
\begin{equation}
\Delta_n^t = \{  (l_1, \ldots, l_{n}) \in \R^{n}: 0 \leq l_1 \leq \cdots \leq l_n \leq l_{n+1} = t \}.
\end{equation}
The paper then claims that $\vol(\Delta_n^t) = \frac{t^n}{n!}$. Although the coordinate change is linear (and is given by an upper triangular matrix) and does not alter the $(n+1)$-dimensional volume on $\R^{n+1}$, it does change the induced $n$-dimensional volume on most affine hyperplanes. The example above demonstrates this point for $n=1$.

Let $T$ denote the above coordinate change. We note that using $T$ will destroy the product structure which is crucial in the computation of the volume of the moduli space of directed paths. More specifically, given a product of two simplices $\Delta_1 \times \Delta_2$, its image $T(\Delta_1 \times \Delta_2)$ is not a product of two simplices. For example, take $\Delta_1$ and $\Delta_2$ to be two intervals ($1$-simplices). Then their product is a rectangle, which is transformed by $T$ into an parallelogram, which in general is not a product of $1$-simplices.

	We now discuss what this means for the continuous multinomial case. As a preliminary, the \textbf{frequency vector} $\nu(c)$ of a Smirnov word $c$ encodes the number of times each letter appears in $c$. Note that the coordinates of $\nu(c)$ sum up to $n$. We now consider
\begin{equation}
		\begin{Bmatrix} x_1+\dots+x_d \\ x_1\ ,\ \dots\ ,\ x_d \end{Bmatrix} := \vol \left( \mathcal{M}_{\mathbf{E}}(\mathbf{x}) \right):= 
		 \sum_{n=0}^{\infty} \sum_{\mathbf c \in D(n,d)} \vol P(\q, \mathbf c),
\end{equation}
the previously motivated definition of the continuous multinomial. We take $k = d$ and $\mathbf{W} = \mathbf{E} := \{ \mathbf{e}_1, \dots, \mathbf{e}_d \}$ the standard basis of $\R^d$. Let $\nu = (\nu_1, \dots, \nu_d) \in \Z_{\geq 0}^d$ be an integer vector with $\nu_1+\dots+\nu_d=n$ and $c$ a Smirnov word with frequency vector $\nu$. Then, we have the following identities:
		\begin{align}
			\mathcal{M}^c_{\mathbf{E}}(\mathbf{x}) &= \{ a_1,\dots,a_n \in \R_{\geq 0} \ : \ a_1 \mathbf{e}_{c_1} + \dots + a_n \mathbf{e}_{c_n} = \mathbf{x} \} \label{eq:Identity1}\\
				&= \{ a_1,\dots,a_n \in \R_{\geq 0} \ : \ \sum_{i_1:c_{i_1}=1} a_{i_1} = x_1, \dots, \sum_{i_d:c_{i_d}=d} a_{i_d} = x_d \}. \label{eq:Identity2}
		\end{align}
		In other words, the vector identity in \eqref{eq:Identity1} breaks into $d$ independent scalar identities in \eqref{eq:Identity2}. Therefore, $\mathcal{M}^c_{\mathbf{E}}(\mathbf{x})$ is the direct product of $d$ simplices isomorphic to $\Delta^{n_1}(x_1), \dots, \Delta^{n_d}(x_d)$, where
		\[ \Delta^m(y) := \{ b_1, \dots, b_m \in \R_{\geq 0} \ : \ b_1 + \dots + b_m = y \}. \]
		Stricly speaking, the volume of the simplex $\Delta^m(y)$ is $\frac{y^{m-1}\sqrt{m}}{(m-1)!}$.

\subsection{An alternate definition of volume}
	
If, instead of the usual Riemannian volume, we would use the Cano and Diaz modified volume measure, which we call 
 $\vol_{\text{CD}}$, and defined by:
 \begin{equation}\label{alternatevolume}
  \vol_{\text{CD}}(\Delta^m(y)) = \frac{y^{m-1}}{(m-1)!},
  \end{equation}
then our continuous binomial coefficient would coincide with that of Cano and D\'iaz.  
\noindent
Therefore, by the product rule of volumes of products of simplices, we would obtain 
the modified result \[ \vol_{\text{CD}}(\mathcal{M}^c_{\mathbf{E}}(\mathbf{x})) = \frac{x_1^{\nu_1-1}}{(\nu_1-1)!} \dots \frac{x_d^{\nu_d-1}}{(\nu_d-1)!}. \]
and one could interpret this as a \textit{definition} for the continuous multinomial coefficient, as follows:
\begin{defn}
Let $\nu$ denote the frequency vector of the Smirnov word $c$. Then 
\begin{equation}
		\begin{Bmatrix} x_1+\dots+x_d \\ x_1\ ,\ \dots\ ,\ x_d \end{Bmatrix} := \sum_{n=0}^{\infty} \sum_{\mathbf c \in D(n,d)} \frac{x_1^{\nu_1-1}}{(\nu_1-1)!} \dots \frac{x_d^{\nu_d-1}}{(\nu_d-1)!}.
\end{equation}
\end{defn}
We emphasize that this is the object considered in the Cano and  D\'iaz papers \cite{Diaz1} and \cite{Diaz2}, and in the two dimensional case this coincides with the continuous binomial of Cano and D\'iaz:
\begin{equation*}
	\label{I0I1}
		\cbinom{x}{s} = 2I_{0}\left(2\sqrt{s\left(x-s\right)}\right)+\frac{x}{\sqrt{s\left(x-s\right)}}I_{1}
		\left(2\sqrt{s\left(x-s\right)}\right).
\end{equation*}

While the continuous multinomial loses some of its geometric intuition and motivation, this renormalized volume
\eqref{alternatevolume} leads to an object with very interesting arithmetic properties. In addition, the non-normalized version with the square root correction terms does \textit{not} have a closed form in terms of hypergeometric functions, even in two dimensions.

There are two competing definitions for the continuous multinomial, which can lead to some confusion. The object $\begin{Bmatrix} x_1+\dots+x_d \\ x_1\ ,\ \dots\ ,\ x_d \end{Bmatrix}$ \textit{always} refers to the definition without a $\sqrt{n}$ term, so Section \ref{Section3} and \ref{Section:PDE} refer to the continuous multinomial \textit{without} the $\sqrt{n}$ term. However, Section \ref{Section5} deal directly to the simplices defining the continuous multinomial. This means that we are using the normal Lebesgue measure on $\R^n$, so we do not use the notation $\begin{Bmatrix} x_1+\dots+x_d \\ x_1\ ,\ \dots\ ,\ x_d \end{Bmatrix}$ anywhere. Therefore, if we were to calculate volumes we would include the $\sqrt{n}$ term.

\section{Continuous multinomials}\label{Section3}

	\begin{proof}[Proof of Theorem \ref{thm:ContMonomials}]		
Given a vector $\nu = (\nu_1, \dots, \nu_k) \in \Z_{\geq 0}^k$ such that $\nu_1 + \dots + \nu_N = n$, let $D(n,N;\nu)$ denote the subset of Smirnov words in $D(n,N)$ whose frequency vectors are all equal to $\nu$.	As shown by Flajolet and Sedgewick \cite{Flajolet}, the cardinality of $D(n,N;\nu)$ is the coefficient of $y_1^{\nu_1} \dots y_N^{\nu_k}$ in the power series representation of the rational function
	\[ F(y_1, \dots, y_N) = \frac{1}{1-\left( \frac{y_1}{1+y_1} + \dots + \frac{y_d}{1+y_N} \right)}. \]
	
We expand $F$ into a Taylor series about the origin:
	
\[ F(x_1,\dots,x_d) = \frac{1}{1 - \left( \frac{x_1}{1+x_1} + \dots + \frac{x_d}{1+x_d} \right)} = \sum_{\nu_1,\dots,\nu_d=0}^{\infty} f_{\nu_1,\dots,\nu_d} x_1^{\nu_1} \dots x_d^{\nu_d}, \]
		where $f_{\nu_1,\dots,\nu_d}$ counts the number of Smirnov words with frequency vector $(\nu_1,\dots,\nu_d)$, as mentioned above. Therefore,
		\begin{align*}
			\begin{Bmatrix} x_1+\dots+x_d \\ x_1\ ,\ \dots\ ,\ x_d \end{Bmatrix} &= \sum_{n=0}^{\infty} \sum_{\mathbf c \in D(n,d)} \frac{x_1^{\nu_1-1}}{(\nu_1-1)!} \dots \frac{x_d^{\nu_d-1}}{(\nu_d-1)!}  \\
				&= \sum_{\nu_1,\dots,\nu_d=0}^{\infty} f_{\nu_1,\dots,\nu_d} \frac{x_1^{\nu_1-1}}{(\nu_1-1)!} \dots \frac{x_d^{\nu_d-1}}{(\nu_d-1)!} \\
				&= \pd{x_1} \cdots \pd{x_d} \left( \sum_{\nu_1,\dots,\nu_d=0}^{\infty} f_{\nu_1,\dots,\nu_d} \frac{x_1^{\nu_1}}{\nu_1!} \dots \frac{x_d^{\nu_d}}{\nu_d!} \right)\\
				&= \pd{x_1} \cdots \pd{x_d}\mathcal{B}(F)(x_1,\dots,x_d),
		\end{align*}
\noindent		
which completes the proof.
	\end{proof}

	\bigskip
	
\noindent	
When $d=2$, we can set $(x_1,x_2) = (x,y)$ and compute
	\[ F(x,y) = (1+x+y) + \sum_{n=1}^{\infty} \left( x^n y^n + x^n y^{n+1} + x^{n+1} y^n \right), \]
	and 
	\[ \mathcal{B}(F)(x,y) = (1+x+y) + \sum_{n=1}^{\infty} \left( \frac{x^n}{n!} \frac{y^n}{n!} + \frac{x^n}{n!} \frac{y^{n+1}}{(n+1)!} + \frac{x^{n+1}}{(n+1)!} \frac{y^n}{n!} \right), \]
	Therefore, we retrieve the formula for the continuous binomials in \cite{Diaz2}:
	\begin{align*} \cbinom{x+y}{x} &= \begin{Bmatrix} x+y \\ x\ ,\ y \end{Bmatrix} = \pd{x}\pd{y} \mathcal{B}(F)(x,y) \\
	& = \sum_{n=0}^{\infty} \left( \frac{x^n}{n!} \frac{y^n}{n!} + \frac{x^n}{n!} \frac{y^{n+1}}{(n+1)!} + \frac{x^{n+1}}{(n+1)!} \frac{y^n}{n!} \right)\\
	& = 2 I_0\left(2 \sqrt{xy} \right) + \left(x+y\right) \frac{I_2\left(2 \sqrt{xy} \right)}{\sqrt{xy}}.
		\end{align*}

	\bigskip
	
\section{Partial differential identity}\label{Section:PDE}

\begin{proof}[Proof of Theorem \ref{thm:PDIdentity}]
	Our approach is inspired by the method of dynamic programming in computer science. 
	
	Let us write $\mathcal{M}(\mathbf{x}) = \mathcal{M}_{\mathbf{E}}(\mathbf{x})$. The moduli space $\mathcal{M}(\mathbf{x})$ can be decomposed into subspaces $\mathcal{M}^{x_1}_{n+1}(\mathbf{x}), \dots, \mathcal{M}^{x_d}_{n+1}(\mathbf{x})$ of directed paths of length $n+1$ whose last step are in directions $\mathbf{e}_1, \dots, \mathbf{e}_d$, respectively. 
	
	Consider the subspace $\mathcal{M}_{n+1}(\mathbf{x})$ of directed paths of length $n+1$ from $\mathbf{0}$ to $\mathbf{x}$ following the directions of the standard basis vectors $\mathbf{e}_1, \dots, \mathbf{e}_d$. As above, this subspace can be further decomposed into $d$ pieces $\mathcal{M}^{x_1}_{n+1}(\mathbf{x}), \dots, \mathcal{M}^{x_d}_{n+1}(\mathbf{x})$, where $\mathcal{M}^{x_i}_{n+1}(\mathbf{x})$ is the set of such directed paths whose last step follows the direction of $\mathbf{e}_i$.
	
	Let $1 \leq i \leq d$. Suppose the last step is of distance $x_i-s$ in direction $\mathbf{e}_i$. Then the corresponding slice of $\mathcal{M}^{x_i}_{n+1}(\mathbf{x})$ can be identified isometrically with the disjoint union of $\mathcal{M}^{x_j}_{n}(x_1, \dots, x_{i-1}, s, x_{i+1}, \dots, x_d)$ for $j \neq i$. Therefore, by Fubini's theorem, we have
	\[ \vol(\mathcal{M}^{x_i}_{n+1}(\mathbf{x})) = \int_0^{x_i} \sum_{j \neq i} \vol(\mathcal{M}^{x_j}_{n}(x_1, \dots, x_{i-1}, s, x_{i+1}, \dots, x_d)) ds, \]
	which implies,
	\begin{equation} 
		\pd{x_i} \vol(\mathcal{M}^{x_i}_{n+1}(\mathbf{x})) = \sum_{j \neq i} \vol(\mathcal{M}^{x_j}_{n}(\mathbf{x})). \label{eq:PDIdentity1}
	\end{equation}
	The base case is $n=d$ with $\vol(\mathcal{M}^{x_i}_{d}(\mathbf{x})) = (d-1)!$. By convention, for $n < d$, $\vol(\mathcal{M}^{x_i}_{n+1}(\mathbf{x})) = 0$. Summing equation \eqref{eq:PDIdentity1} over all $n \geq d$, we obtain the following identity:
	\begin{equation} 
	\pd{x_i} \vol(\mathcal{M}^{x_i}(\mathbf{x})) = \sum_{j \neq i} \vol(\mathcal{M}^{x_j}(\mathbf{x})). \label{eq:PDIdentity2}
	\end{equation}
	or equivalently, 
	\begin{equation} 
	\left(1+\pd{x_i}\right) \vol(\mathcal{M}^{x_i}(\mathbf{x})) = \sum_{j = 1}^d \vol(\mathcal{M}^{x_j}(\mathbf{x})) = \vol(\mathcal{M}(\mathbf{x})). \label{eq:PDIdentity3}
	\end{equation}
	This infers
	\begin{equation} 
	\prod_{j=1}^n \left(1+\pd{x_j}\right)  \vol(\mathcal{M}^{x_i}(\mathbf{x})) = \prod_{j \neq i} \left(1+\pd{x_j}\right) \vol(\mathcal{M}(\mathbf{x})). \label{eq:PDIdentity4}
	\end{equation}
	Summing this identity over  $1\leq i \leq n$, we obtain the desired identity.
\end{proof}

\bigskip \noindent
In the case that the dimension $d=2$ and $(x,y) = (x_1,x_2)$,  identity \eqref{eq:PDIdentity} becomes
	\[ 
	\left(1+\pd{x}\right) \left(1+\pd{y}\right) \cbinom{x+y}{x} = \left(1+\pd{x} + 1 + \pd{y}\right) \cbinom{x+y}{x}, 
	\]
which simplifies to the Cano and D\'iaz result 
\[ 
\pd{x}\pd{y} \cbinom{x+y}{x} = \cbinom{x+y}{x}. 
\]

\bigskip
\section{Recovering discrete objects}\label{Section5}
In this section, we retrieve discrete binomial coefficients from the continuous binomial case. This is due to a general result in lattice point counting, the \textit{Khovanskii-Pukhlikov theorem}. We describe the theorem, and then carry out a calculation involving it.

We begin with the fundamental \textbf{Todd operator}, which is defined to be the following differential operator:
\[ \Todd_h := \frac{d/dh}{1-e^{-d/dh}} = \sum_{k \geq 0} (-1)^k \frac{B_k}{k!} \left(\frac{d}{dh}\right)^k = 1 + \frac{1}{2}\frac{d}{dh} + \frac{1}{12} \left(\frac{d}{dh}\right)^2 - \frac{1}{720} \left(\frac{d}{dh}\right)^4 + \dots \]
Here, $B_{k}=B_{k}\left(0\right)$  are the Bernoulli numbers and $B_{k}\left(x\right)$ are the Bernoulli polynomials with generating function
\[
\sum_{k \geq 0} z^{k}\frac{B_{k}\left(x\right)}{k!} = \frac{ze^{zx}}{e^{z}-1}.
\]


We next consider \textbf{unimodular integral polytopes}, i.e. polytopes whose vertices have integer coordinates and whose vertex tangent cones are generated by some basis of $\Z^d$ (and hence simple). Given a polytope $P$ and vertex $\mathbf v$, we define the vertex tangent cone at $\mathbf v$ to be $$\{\mathbf v + \lambda (\mathbf y - \mathbf v): \mathbf y \in P, \lambda \in \R_{\geq 0} \}. $$
Suppose $P$ has the hyperplane description 
\[ P = \{ \mathbf{x} \in \R^d : \mathbf{A}\mathbf{x} \leq \mathbf{b} \},\]
where the column vectors of $\mathbf{A}$ are primitive integer vectors in $\Z^d$. We define the perturbed polytope
\[ P(\mathbf{h}) = \{ \mathbf{x} \in \R^d : \mathbf{A}\mathbf{x} \leq \mathbf{b} + \mathbf{h} \}, \]
for some small $\mathbf{h} = (h_1,h_2,\dots,h_m)$. We also define the multi-dimensional Todd operator
\[ \Todd_{\mathbf{h}} := \prod_{k=1}^m \Todd_{h_k}. \]
The fundamental role of Todd operators is highlighted by the \textbf{Khovanskii-Pukhlikov theorem} for a unimodular polytope $P$:
\[ \#(P \cap \Z^d) = \left. \Todd_{\mathbf{h}} \vol(P(\mathbf{h})) \right|_{\mathbf{h}=0}. \]
More generally,
\[ \sum_{\mathbf{x} \in P \cap \Z^d} \exp(\mathbf{x} \cdot \mathbf{z}) = \left. \Todd_{\mathbf{h}} \int_{P(\mathbf{h})} \exp(\mathbf{x} \cdot \mathbf{z}) d\mathbf{x} \right|_{\mathbf{h}=0}. \]
The assumption of unimodularity is important. Loosening it will require replacing the Todd operator with much more complicated differential operators, as shown in a version of Euler-Maclaurin formula for simple polytopes in \cite{KSW}.

Lattice paths are directed paths whose steps are only allowed to be integer multiples of one of the admissible directions. We now consider the binomial case. Given a combinatorial pattern, the moduli space of \textit{directed} paths with the given pattern, from $(0,0)$ to $(x,y)$, is a (direct) product of simplices of the form
\[ \Delta := \Delta^x_n := \{ a_1, \dots, a_n \geq 0, a_1 + \dots + a_n = x \}. \]
The space of \textit{lattice} paths, with the same pattern, from $(0,0)$ to $(x,y)$ is a (direct) product of \textit{discrete} simplices of the form
\[ \Lambda := \Lambda^x_n := \Delta^x_n \cap \Z_+^n = \{ a_1, \dots, a_n \in \Z, a_1, \dots, a_n > 0, a_1 + \dots + a_n = x \}. \]
Note that the simplex $\Delta$ is not full-dimensional, so the Khovanskii-Pukhlikov theorem does not apply directly and we do need a small trick to make it work. For a small $\mathbf{h} = (h_1,\dots,h_n,h_+,h_-)$, consider the perturbed simplex
\[ \Delta(\mathbf{h}) := \{ a_1 \geq h_1, \dots, a_n \geq h_n,\quad x - h_- \leq a_1 + \dots + a_n \leq x + h_+ \}, \]
whose volume is
\[ \vol(\Delta(\mathbf{h})) = \frac{1}{n!} \left( (x+h_+-h_1-\dots-h_n)^n - (x-h_--h_1-\dots-h_n)^n \right). \]
We expect
\[ \left. \Todd_{\mathbf{h}} \vol(\Delta(\mathbf{h})) \right|_{\mathbf{h}=0} = \# \Lambda = \binom{x-1}{n-1}. \]

Set $\mathbf{h}' = (h_1,\dots,h_n)$ and use the following notation.
\begin{align*}
\Delta' &:= \{ a_1, \dots, a_n \geq 0, a_1 + \dots + a_n \leq x \},\\
\Delta'(\mathbf{h}',h_+) &:= \{ a_1, \dots, a_n \geq 0, a_1 + \dots + a_n \leq x + h_+ \},\\
\Delta'(\mathbf{h}',h_-) &:= \{ a_1, \dots, a_n \geq 0, a_1 + \dots + a_n \leq x - h_- \},\\
\Lambda'^+ &:= \{ a_1, \dots, a_n \in \Z, a_1, \dots, a_n > 0, a_1 + \dots + a_n \leq x \},\\
\Lambda'^- &:= \{ a_1, \dots, a_n \in \Z, a_1, \dots, a_n > 0, a_1 + \dots + a_n < x \}.
\end{align*}
By a polarized version of the Khovanskii-Pukhlikov theorem in \cite{KSW}, we have
\begin{align*}
\left. \Todd_{(\mathbf{h}',h_+)} \vol(\Delta'(\mathbf{h}',h_+)) \right|_{\mathbf{h}'=0=h_+} &= \# \Lambda'^+ = \binom{x}{n}, \\
\left. \Todd_{(\mathbf{h}',h_-)} \vol(\Delta'(\mathbf{h}',h_-)) \right|_{\mathbf{h}'=0=h_-} &= \# \Lambda'^- = \binom{x-1}{n}.
\end{align*}
These identities can also be verified manually from the definition of the Todd opearator. 
Also, it is important to note that our simplex is unimodular. Otherwise, the Khovanskii-Pukhlikov theorem does not apply, and we will have to use a more complicated version by Karshon-Sternberg-Weitsman \cite{KSW}, which applies to all simple polytopes.

Therefore,
\begin{align*}
&\left. \Todd_{\mathbf{h}} \vol(\Delta(\mathbf{h})) \right|_{\mathbf{h}=0} \\ &= \left. \Todd_{(\mathbf{h}',h_+)} \vol(\Delta'(\mathbf{h}',h_+)) \right|_{\mathbf{h}'=0=h_+} - \left. \Todd_{(\mathbf{h}',h_-)} \vol(\Delta'(\mathbf{h}',h_-)) \right|_{\mathbf{h}'=0=h_-} \\ &= \# \Lambda'^+ - \# \Lambda'^- = \binom{x}{n} - \binom{x-1}{n} = \binom{x-1}{n-1} =  \# \Lambda.
\end{align*}


\section{Further remarks}

In general, applying the Khovanskii-Pukhlikov theorem to the simplices that define our continuous objects will recover the appropriate discrete objects.  The Khovanskii-Pukhlikov machinery also works in the case that the polytope is a simple polytope \cite{KSW}, not merely a unimodular polytope.  Thus one could apply our approach to simple polytopes in order to discretize them and perhaps obtain future discretization results of this flavor. 

In principle, one could begin with an arbitrary set of admissible directions which are not even necessarily integer vectors, and develop an analogous theory.

Another interesting direction for future research is the pursuit of an $L_1$-metric approach to volumes, since incorporating such an $L_1$ approach into the Cano-Diaz machine might yield very interesting results.


\end{document}